\providecommand{\U}[1]{\protect\rule{.1in}{.1in}}
\newtheorem{theorem}{Theorem}
\newtheorem{corollary}[theorem]{Corollary}
\newtheorem{lemma}[theorem]{Lemma}
\newtheorem{remark}[theorem]{Remark}
\newcommand{\diver}{{{\rm{div}\,}}}
\begin{document}

\title[Spectral, stochastic and  curvature  estimates]{Spectral, stochastic and  curvature  estimates  for submanifolds of highly negative
curved  spaces}
\author{G. Pacelli  Bessa}
\address{Departamento de Matem\'atica \\Universidade Federal do Cear\'a-UFC\\
60455-760 Fortaleza, CE, Brazil}
\email{bessa@mat.ufc.br}
\author{Stefano Pigola}
\address{Sezione di Matematica - DiSAT\\
Universit\'a dell'Insubria - Como\\
via Valleggio 11\\
I-22100 Como, ITALY}
\email{stefano.pigola{@}uninsubria.it}
\author{Alberto G. Setti}
\address{Sezione di Matematica - DiSAT\\
Universit\'a dell'Insubria - Como\\
via Valleggio 11\\
I-22100 Como, ITALY}
\email{alberto.setti@uninsubria.it}
\thanks{The third named author is grateful to the Universidade Federal do
Cear\'a-UFC for the very warm hospitality and to CAPES-Ci\^{e}ncia Sem Fronteiras, grant
400794/2012-8, for financial support.}
\subjclass[2010]{53C42, 58J50}
\keywords{Submanifolds, spectral and mean curvature estimate}
\begin{abstract}We prove spectral, stochastic and mean curvature estimates for complete $m$-submanifolds $\varphi \colon M \to N$ of $n$-manifolds with a pole $N$   in terms of the comparison isoperimetric ratio $I_{m}$ and the extrinsic radius $r_\varphi\leq \infty$.
Our proof holds for the bounded case $r_\varphi< \infty$, recovering the known results, as well as for the unbounded case $r_{\varphi}=\infty$. In both cases, the fundamental ingredient in these estimates is the integrability over $(0, r_\varphi)$ of the inverse
$I_{m}^{-1}$ of the comparison isoperimetric radius. When $r_{\varphi}=\infty$,  this condition is guaranteed if $N$ is highly negatively curved.
\end{abstract}
\maketitle

\section{Introduction }
Let $N$ be a complete Riemannian $n$-manifold with a pole $p$ and let   $\varphi \colon M \to N$ be an isometric immersion  of a complete Riemannian $m$-manifold $M$ into  $N$. The extrinsic radius $r_{\varphi}$ is the (possibly extended) number $$r_{\varphi}=\inf\{ r\in (0, \infty]\colon\, \varphi (M)\subset B_{N}(r)\}, $$where $B_{N}(r)\subseteq N$ is the geodesic ball  centered at $p$ of $N$ with radius $r\in(0, \infty]$. Suppose that the radial\footnote{Along the geodesics issuing from $p$.} sectional curvatures of $N$ at $x\in B_{N}(r_{\varphi})$ satisfies
\begin{equation}
\label{eqCurv}K_N(x)\leq - G(\rho_{N}(x))
\end{equation}
where $G\colon\mathbb{R}\rightarrow\mathbb{R}$ is a smooth
even function and $\rho_{N}(x)={\rm dist}_{N}(p,x)$.

 Associate to $N$, the  $m$-dimensional {\em model} manifold
$\mathbb{M}_{\sigma}^{m}$ with
radial sectional curvature $-G\left(  r\right)  $. Namely,%
\[
\mathbb{M}_{\sigma}^{m}=\left(  [0,r_{\varphi}]  \times\mathbb{S}%
^{m-1}\!\!,\,\,ds^{2}_{\sigma}=dr^2+\sigma^{2}\left(r\right)d\theta^{2}\right),
\]
where $\sigma$ denotes the unique solution of the Cauchy problem on $(0, r_{\varphi}]$
\begin{equation}
\label{sigma ivp}
\left\{
\begin{array}{rll}
\begin{array}{l}
   \sigma''(t)-G(t)\sigma(t)=0, \\[0.1cm]
   \sigma(0)=0, \,\,\,\,\; \sigma'(0)=1,
  \end{array}
\end{array}\right.
\end{equation}
which we assume to be positive and increasing on $[0,r_\varphi)$. Observe that if $G\geq 0$ on $[0,\infty)$ then
$\sigma''\geq 0$ everywhere and  $\sigma'\geq 1>0$ on $[0,\infty).$ More generally,
it follows by a strengthened version of Kneser Theorem, see e.g.
\cite[Prop. 1.21]{bianchini-mari-rigoli}, that if
$G_{-}=\max\{ -G, 0\}$ satisfies
\begin{equation} t\int_{t}^{\infty}G_{-}(s)ds
\leq \frac{1}{4},
\label{eqBMR-memoirs}
\end{equation}
then $\sigma'\geq 0$.

Associate to the model  $\mathbb{M}_{\sigma}^{m}$
 the function $I_{m}\colon [0, r_{\varphi}]\to \mathbb{R}_{+}$ defined by
\[
I_{m}\left(  r\right)  =\frac{\sigma\left(  r\right)  ^{m-1}}{\int_{0}^{r}%
\sigma\left(  t\right)  ^{m-1}dt}.
\]
 In geometric terms,  $I_{m}$ is the
\textit{non-homogeneous isoperimetric ratio}%
\[
I_{m}\left(  r\right)  =\frac{\mathrm{vol\,}\partial B_{r}\left(  o\right)
}{\mathrm{vol\,}B_{r}\left(  o\right)  },
\]
where $B_{r}\left(  o\right)  $ and $\partial B_{r}\left(  o\right)  $ are the
geodesic balls and spheres in $\mathbb{M}_{\sigma}^{m}$ of radius $r>0$ and
center at the pole $o$ of the model. In particular, the Cheeger constant
$h\left(  \mathbb{M}_{\sigma}^{m}\right)  $ of the model has  the upper
estimate%
\[
\inf_{\lbrack0,+\infty)}I_{m}\left(  r\right)  \geq h\left(
\mathbb{M}_{\sigma}^{m}\right)  .
\]
Also associated to the model  $\mathbb{M}_{\sigma}^{m}$, is the \textit{homogeneous isoperimetric ratio} $\mathcal{I}_{m}\left(  r\right)$
which is defined by
\[
\mathcal{I}_{m}\left(  r\right)  =\frac{\sigma\left(  r\right)  ^{m}}{\int_{0}%
^{r}\sigma\left(  t\right)  ^{m-1}dt}=\frac{\mathrm{vol\,}\partial B_{r}\left(
o\right)^{\frac{m}{m-1}}}{\mathrm{vol\,}B_{r}\left(  o\right)  }.
\]

We showed in \cite{BPS-POTA} that  if $\varphi\colon M \to N$ is minimal and  the following extrinsic conditions
$$ \mathcal{I}_{m}'(t)\geq 0,\,\,t\in (0, r_{\varphi}) \quad \text{and}\quad I_m^{-1}\in L^{1}\left(0, r_{\varphi}\right)
$$
hold, then  the global mean exit time  of $ M$ is finite, and, in fact,
$$
E_{M}\leq\smallint_{0}^{ r_{\varphi}}I_{m}^{-1}(s)ds< \infty.
$$
In particular, $M$ is not $L^{1}$-Liouville.
Three aspects of this result  should be remarked. First, the condition
$ \mathcal{I}_{m}'(t)\geq 0$ is implied by $-G \leq 0$.
 Indeed, $$\mathcal{I}_{m}'(t)=
m\frac{\sigma'}{\sigma} - \frac{\sigma^{m-1}}{\int_0^t \sigma^{m-1}}=\frac{1}{\sigma \int_0^t \sigma^{m-1}} \left( m\sigma' \int_0^t \sigma^{m-1} - \sigma^m\right), $$ thus $\mathcal{I}_{m}'(t)\geq 0 \Leftrightarrow
\left( m\sigma' \int_0^t \sigma^{m-1} - \sigma^m\right)\geq 0$. Now,  $\left( m\sigma' \int_0^t \sigma^{m-1} - \sigma^m\right)\to 0$ as $r\to 0+$ and its derivative is
$$
m\sigma''\int_0^t \sigma^{m-1}.
$$ Thus if $\sigma''\geq 0$, that is, if   the curvature is nonpositive, then $m\sigma''\int_0^t \sigma^{m-1}\geq 0$ and therefore $\mathcal{I}_{m}'(t)\geq 0$.
Note however that, while it is necessary that $\sigma''\geq 0$ in a right neighborhood of $0$, $\mathcal{I}_{m}(t)$ could be nondecreasing even in the presence of some controlled negativity of $\sigma''$.

Second, the requirement   $I_m^{-1}\in L^{1}\left(0, r_{\varphi}\right) $ is automatically satisfied  if $ r_{\varphi}<\infty$, and in this case,  in \cite[Thm.9]{BPS-POTA} we recover
S. Markvorsen's result \cite[Thm.1-item ii.]{markvorsen-JDG} under the slightly weaker hypothesis $\mathcal{I}_{m}'(t)\geq 0$.

Finally, in the case where the extrinsic diameter is infinite, the condition that $I_{m}^{-1}$ is integrable is equivalent to the stochastic incompleteness and implies a great amount of negative curvature of the $m$-dimensional model $\mathbb{M}^{m}_{\sigma}$  (and thus of $N$).

The result  \cite[Thm.9]{BPS-POTA} suggests that there should exist a correspondence between   results  valid for  complete, bounded  submanifolds of $ N$  and companion results for complete, unbounded immersions $\varphi \colon M \to N$, into a manifold  $N$ with a pole, with radial sectional curvatures bounded above as in \eqref{eqCurv} and  such that $I_m^{-1}\in L^{1}\left(0, +\infty \right) $.

The purpose of this paper is to show that   this correspondence does exist for a variety of   results, including  well known curvature,   stochastic and spectral estimates for bounded submanifolds, of which we shall prove counterparts in the unbounded highly negatively curved setting.

\goodbreak
\section{Statement of the results}
\begin{theorem} [Spectral estimates]
\label{spec-est}
Let $\varphi\colon M\rightarrow N$ be an isometric
immersion of a complete $m$-dimensional Riemannian manifold $M  $ into the complete $n$-dimensional Riemannian manifold $N$ with a
pole $p\in N$. Let $\rho_{N}\left(  y\right)  ={\rm dist}_{N}(p,y)  $ and \label{SpectralEstimates}
assume that the radial sectional curvature of $N$ satisfies%
\[
\mathrm{Sec}_{\mathrm{rad}}^{N}\left(  x\right)  \leq-G\left(  \rho_{N}\left(
x\right)  \right)
\]
for some smooth, even function $G$. Assume also that the solution $\sigma$ of
\eqref{sigma ivp} satisfies $\sigma'\geq 0$ on $[0,\mathrm{diam}\varphi(M)]$, that  $\mathcal{I}_m$ is nondecreasing in that interval, and
\[
A=\sup_{M}\displaystyle\frac{\left\vert \mathbf{H}\right\vert \left(  x\right)  }{I_{m}\left(
\rho_{N}\left(  \varphi\left(  x\right)  \right)  \right)  }\leq 1,\]
where $\mathbf{H}$ is the mean curvature vector field of $\varphi$.
Then
\begin{enumerate}
\item[(a)] The bottom of the spectrum of the Laplace-Beltrami operator of $M$
satisfies the estimate%
\[
\lambda^{\ast}\left(M\right)  \geq
\max\left\{\frac{\left(  1-A\right)  }
{\int_{0}^{r_{\varphi}  }I_{m}\left(  t\right)  ^{-1}dt},
\,\, (1-A)^2 \inf_{[0,r_{\varphi}  ]}
\frac{I_{m}\left(  r\right)
^{2}}{4}\right\}
\]
\item[(b)]
If $\varphi$ is proper in the geodesic ball $B_{N}(r_{\varphi})$, $\int_0^{r_{\varphi}} I_m^{-1} dt< +\infty$ and $A<1$ then the spectrum of $-\Delta_{M}$ is discrete.
\end{enumerate}
\end{theorem}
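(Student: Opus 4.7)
\ My plan is to derive all three claims from a single transplanted radial function. Set
\[
g(r)=\int_{0}^{r} I_{m}^{-1}(s)\,ds\qquad \text{on }[0,r_{\varphi}],
\]
so that $g'=1/I_{m}\geq 0$, and a direct computation shows that $g$ satisfies
\[
g''+(m-1)(\sigma'/\sigma)g'=1,
\]
that is, $g$ is the radial solution of $\Delta_{\mathbb{M}_{\sigma}^{m}}g=1$ with $g(0)=g'(0)=0$. Define $u(x)=g(\rho_{N}(\varphi(x)))$ on $M$. The technical core of the proof is the pointwise inequality
\[
\Delta_{M}u\ \geq\ 1-A\qquad\text{on }M.
\]
I would derive it as follows. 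Writing $s=|\nabla^{M}\rho_{N}|^{2}\in[0,1]$, the chain rule gives $\Delta_{M}u=g''\,s+g'\,\Delta_{M}(\rho_{N}\circ\varphi)$. The standard submanifold Laplace identity applied to $\rho_{N}$, together with the Hessian comparison theorem in $N$ (available because $\mathrm{Sec}_{\mathrm{rad}}^{N}\leq -G$ and $\sigma'\geq 0$ on $[0,\mathrm{diam}\,\varphi(M)]$), yields $\Delta_{M}(\rho_{N}\circ\varphi)\geq (\sigma'/\sigma)(m-s)-|\mathbf{H}|$. Substituting this in and using the ODE to eliminate $g''$, the coefficient of $s$ reduces to $1-m(\sigma'/\sigma)g'$, which is non-positive by the algebraic form of $\mathcal{I}_{m}'\geq 0$, namely $m\sigma'/\sigma\geq I_{m}$. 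Evaluating at the worst case $s=1$ collapses the estimate to $\Delta_{M}u\geq 1-|\mathbf{H}|/I_{m}\geq 1-A$.

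With this in hand, the two bounds in (a) are two packagings of the same inequality. For the first, apply Barta to the strictly positive function $v_{\varepsilon}=g(r_{\varphi})+\varepsilon-u$, which satisfies $-\Delta v_{\varepsilon}=\Delta_{M}u\geq 1-A$ and $\sup_{M}v_{\varepsilon}\leq g(r_{\varphi})+\varepsilon$; the Barta bound $\lambda^{\ast}(M)\geq \inf(-\Delta v_{\varepsilon}/v_{\varepsilon})\geq (1-A)/(g(r_{\varphi})+\varepsilon)$ together with $\varepsilon\downarrow 0$ concludes. For the second, observe that $|\nabla^{M}u|\leq g'(\rho_{N})=1/I_{m}(\rho_{N})\leq 1/\inf_{[0,r_{\varphi}]}I_{m}$, so that for any precompact smooth domain $D\subset M$ the divergence theorem and $\Delta_{M}u\geq 1-A$ give
\[
(1-A)\operatorname{vol}(D)\ \leq\ \int_{D}\Delta_{M}u\ =\ \int_{\partial D}\langle\nabla^{M}u,\nu\rangle\ \leq\ \frac{\operatorname{vol}(\partial D)}{\inf I_{m}}.
\]
Hence the Cheeger constant satisfies $h(M)\geq (1-A)\inf_{[0,r_{\varphi}]}I_{m}$, and Cheeger's inequality $\lambda^{\ast}(M)\geq h(M)^{2}/4$ closes the second estimate.

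Part (b) follows by combining the first bound with Persson's characterization of the essential spectrum. Properness of $\varphi$ in $B_{N}(r_{\varphi})$ makes the sets $K_{R}=\varphi^{-1}(\overline{B_{N}(R)})$ compact in $M$ for every $R<r_{\varphi}$, and they exhaust $M$ as $R\uparrow r_{\varphi}$. On $M\setminus K_{R}$ one has $\rho_{N}\circ\varphi\geq R$, so the same Barta step with $v_{\varepsilon}$ bounded there by $\int_{R}^{r_{\varphi}}I_{m}^{-1}+\varepsilon$ yields
\[
\lambda^{\ast}(M\setminus K_{R})\ \geq\ \frac{1-A}{\int_{R}^{r_{\varphi}}I_{m}^{-1}(s)\,ds}.
\]
The integrability assumption $\int_{0}^{r_{\varphi}}I_{m}^{-1}<+\infty$ drives this lower bound to infinity as $R\uparrow r_{\varphi}$, so $\lambda^{\mathrm{ess}}(M)=+\infty$ by Persson, which is equivalent to the discreteness of the spectrum of $-\Delta_{M}$.

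The one delicate step is the pointwise Laplacian estimate $\Delta_{M}u\geq 1-A$: the comparison Hessian in $N$, the model ODE satisfied by $g$, and the algebraic rephrasing of $\mathcal{I}_{m}'\geq 0$ have to conspire so that the $|\nabla^{M}\rho_{N}|^{2}$-dependent contributions cancel and leave only the universal constant $1-A$. Everything past that is standard repackaging via Barta, Cheeger, and Persson.
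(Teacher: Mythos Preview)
Your proof is correct and the central step---the pointwise inequality $\Delta_{M}u\geq 1-A$ for $u=g(\rho_{N}\circ\varphi)$ with $g'=I_{m}^{-1}$, obtained from Hessian comparison, the model ODE for $g$, and the algebraic form $m(\sigma'/\sigma)g'\geq 1$ of $\mathcal{I}_{m}'\geq 0$---is exactly the computation the paper performs (its inequality~(\ref{eq-H})). The differences from the paper lie only in how this inequality is packaged into the three conclusions.

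For the Cheeger-type bound $(1-A)^{2}\inf I_{m}^{2}/4$, the paper applies the Bessa--Montenegro vector-field lemma $\lambda^{\ast}\geq c(\Omega)^{2}/4$ directly to $X=\nabla u$, whereas you pass through the Cheeger constant via the divergence theorem and then invoke Cheeger's inequality; the two arguments are equivalent repackagings of the same integration by parts. For the Barta bound, the paper uses the auxiliary function $\tilde{F}_{\epsilon}(r)=\int_{r}^{r_{\varphi}+\epsilon}I_{m}^{-1}$ (which requires $I_{m}$ to remain defined slightly beyond $r_{\varphi}$), while your additive $v_{\varepsilon}=g(r_{\varphi})+\varepsilon-u$ avoids that technicality and is marginally cleaner. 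For part~(b), the paper observes that $-u$ is a ``weak maximum principle violating exhaustion function'' and invokes a black-box criterion from~\cite{BPS-Revista}; your direct Persson argument, bounding $\lambda^{\ast}(M\setminus K_{R})$ from below by $(1-A)/\int_{R}^{r_{\varphi}}I_{m}^{-1}$ and letting $R\uparrow r_{\varphi}$, is more self-contained and arguably more transparent. Both routes are valid; yours trades an external citation for a short explicit computation.
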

Observe that item (b) of Theorem \ref{spec-est} extends the main result of \cite{bjm-JGA}.
It is worth noticing that in the case where $\sigma'/\sigma$ is nonincreasing, then
 $I_m'(r)<0$.
Indeed, it is easy to check that $I_m$ satisfies the
Riccati equation
\[
I_m' + I_m^2 = (m-1)\frac{\sigma'}{\sigma}I_m,
\]
and that $I_m(r)\sim m/r$ as $r\to 0$.
It follows that $I_m'<0$ and $I_m> (m-1)\sigma'/\sigma$ in a right neighborhood
of $0$, and an easy comparison argument shows that if the right hand side of the
Riccati  equation is nonincreasing then $I_m'<0$ where defined. In particular,
\[
\inf_{[0,r_{\varphi}]} I_m = \lim_{r\to r_{\varphi}} I_m(r)
\geq \lim_{r\to r_\varphi} (m-1)\frac{\sigma'}{\sigma}.
\]
In the special case where $K_N(x)\leq -k^2<0$, so that
$\mathbb{M}_\sigma^m = \mathbb{H}_{-k^2}^m$ is
hyperbolic space of curvature $-k^2$,
we have
\[
\inf_{[0,r_\varphi)}I_m =
\begin{cases}
(m-1)k \coth (kr_\varphi) &\text{if } r_\varphi<+\infty,\\
(m-1)k &\text{if } r_\varphi = +\infty
\end{cases}
\]
Thus, if $r_\varphi= +\infty$ and  $|\mathbf{H}|\leq H_o < (m-1)k$, we have
\[
(1-A)^2\inf_{[0,+\infty)}
\frac{I_{m}(r)^{2}}{4}= \left(1-\frac{H_o}{(m-1)k}\right)^2\frac{(m-1)^2k^2}4 =
\frac{[(m-1)k-H_o]^2}4,
\]
and, in particular, we recover a result by L.-F. Cheung and P.-F. Leung,
\cite[Theorem 2]{CheungLeung-MathZ}, and Bessa  and Montenegro,
\cite[Corollary 4.4]{BeMo-angloban}.
Similarly, in the case where  $r_\varphi<+\infty$, if $k>0$  $H_o <(m-1)k \coth(kr_\varphi)$,
then
\[
(1-A)^2\inf_{[0,+\infty)}
\frac{I_{m}(r)^{2}}{4}=
\frac{[(m-1)k\coth(kr_\varphi) -H_o]^2}4,
\]
while if $k=0$ and and $H_o <(m-1)/r_\varphi$, then
\[
(1-A)^2\inf_{[0,+\infty)}
\frac{I_{m}(r)^{2}}{4}=
\frac{[(m-1)/r_\varphi -H_o]^2}4.
\]
and we recover  results by K. Seo \cite{Seo-Monat}.

A closer inspection of the proof of Theorem~\ref{spec-est} shows that if we let $M=N$,
then the conclusion holds without having to assume that $\mathcal{I}_n$ be increasing.
Thus we have
\begin{corollary}
\label{cor1}
Let $N$ be a complete Riemannian $n$-manifold with a pole $p$ and  radial sectional curvature satisfying
$$
\mathrm{Sec}_{\mathrm{rad}}^{N}\left(  x\right) \leq - G(\rho_{N}(x))
$$
where $G\colon\mathbb{R}\rightarrow\mathbb{R}$ is a smooth
even function and the solution $\sigma$ of  the initial value problem
\eqref{sigma ivp} satisfies $\sigma' \geq 0$ on $[0,+\infty)$.
Then
\begin{equation}
\lambda^{\ast}(N)\geq \lambda^{\ast}(\mathbb{M}_{\sigma}^{n})\geq \max\left\{\displaystyle\frac{\left[\inf_{(0,\infty)}I_{n}(r)\right]^{2}}{4},\,
\frac{1}{\int_{0}^{\infty}I_{n}^{-1}(r)dr} \right\}\cdot
\end{equation}
Moreover, if $\int_0^{+\infty} I_n^{-1} dt< +\infty$, then the spectrum of $N$ is purely discrete.
\end{corollary}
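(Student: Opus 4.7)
The plan is to derive Corollary \ref{cor1} by specializing Theorem \ref{spec-est} to the identity immersion $\varphi = \mathrm{id}_N \colon N \to N$. Since $N$ has a pole, the exponential map at $p$ is a global diffeomorphism, so $N$ is unbounded and $r_\varphi = +\infty$; the identity is minimal, whence $\mathbf{H} \equiv 0$ and $A = 0$; and the curvature hypothesis of Theorem \ref{spec-est} coincides with that of the corollary. Granted the remark immediately preceding the corollary --- that a closer inspection of the proof of Theorem \ref{spec-est} shows the monotonicity of $\mathcal{I}_n$ to be unnecessary when $M = N$ --- part (a) of that theorem then produces
\[
\lambda^{\ast}(N) \geq \max\left\{\frac{1}{\int_0^{\infty} I_n^{-1}(t)\,dt},\;\inf_{[0,\infty)} \frac{I_n(r)^2}{4}\right\}.
\]

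Running the same specialization with $N$ replaced by the model $\mathbb{M}_\sigma^n$ (which has a pole $o$, realizes the radial curvature bound with equality, and shares the same function $I_n$) produces the identical lower bound for $\lambda^*(\mathbb{M}_\sigma^n)$. To insert the intermediate comparison $\lambda^*(N) \geq \lambda^*(\mathbb{M}_\sigma^n)$ claimed in the statement, I would invoke the classical Cheng-type eigenvalue comparison: the assumption $\mathrm{Sec}_{\mathrm{rad}}^N \leq -G(\rho_N)$ yields, via Hessian comparison on $N$, that $\lambda_1^D(B_N(r)) \geq \lambda_1^D(B_{\mathbb{M}_\sigma^n}(r))$ for every $r>0$; letting $r \to \infty$ gives the desired comparison of bottoms of spectrum.

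The final assertion follows by applying Theorem \ref{spec-est}(b) in exactly the same setting: $\mathrm{id}_N$ is proper, $A = 0 < 1$, and $\int_0^\infty I_n^{-1}\,dt < +\infty$ by hypothesis, so the spectrum of $-\Delta_N$ is purely discrete. The only nonroutine step of the whole proposal is the verification of the bracketed claim that the $\mathcal{I}_n$-monotonicity can be discarded when $\varphi = \mathrm{id}_N$; this should reduce to the observation that in the proof of Theorem \ref{spec-est} the monotonicity of $\mathcal{I}_n$ enters exclusively in order to absorb an extrinsic term involving $|\mathbf{H}|$ inside a Laplacian comparison for $F \circ \rho_N \circ \varphi$, and that term vanishes identically when the immersion is the identity, leaving behind the standard radial Laplacian comparison that requires nothing beyond the curvature hypothesis.
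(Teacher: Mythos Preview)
Your overall strategy---specialize Theorem~\ref{spec-est} to $\varphi=\mathrm{id}_N$, note $A=0$, and read off both the eigenvalue estimate and the discreteness criterion---matches exactly what the paper intends by ``a closer inspection of the proof.'' The Cheng-type argument you add for the intermediate inequality $\lambda^*(N)\geq\lambda^*(\mathbb{M}_\sigma^n)$ is a reasonable supplement; the paper leaves that comparison implicit.

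There is, however, a concrete misstep in your justification for dropping the hypothesis $\mathcal{I}_n'\geq 0$. You assert that this monotonicity ``enters exclusively in order to absorb an extrinsic term involving $|\mathbf{H}|$.'' That is not where it is used. In the proof of Theorem~\ref{spec-est}, after choosing $F'=I_m^{-1}$ one arrives at
\[
\Delta_M(F\circ\rho_N\circ\varphi)\;\geq\;1+\Bigl[m\tfrac{\sigma'}{\sigma}F'(\rho_x)-1\Bigr]\sum_{j=m+1}^{n}\langle\nabla^N\rho_N,Y_j\rangle^2\;-\;F'(\rho_x)\,|\mathbf{H}|,
\]
where the $Y_j$ span the \emph{normal} bundle of $\varphi$. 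The condition $\mathcal{I}_m'\geq 0$, equivalently $m\frac{\sigma'}{\sigma}F'\geq 1$, is invoked precisely to make the bracketed coefficient nonnegative so that the normal-direction sum may be discarded; the mean-curvature term is handled separately by the assumption $A\leq 1$. When $M=N$ there are no normal directions at all, the sum $\sum_j$ is empty, and the bracketed term disappears for that reason---not because $\mathbf{H}=0$. So your conclusion is right but your diagnosis of the mechanism is wrong; correcting it is a one-line fix once you trace the computation.
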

Observe that both alternatives do occur:
\[
\frac{1}{\int_{0}^{r_{\varphi}  }
I_m\left(  t\right) ^{-1}dt} >
\inf_{[0, r_{\varphi})}\frac{I_{m}\left(  r\right)^{2} }{4}
\]
or
\[
\frac{1}{\int_{0}^{r_{\varphi}  }I_m\left(  t\right)^{-1}dt}
< \inf_{[0, r_{\varphi})}\frac{I_{m}\left(  r\right)^{2} }{4}
\]
as shown by the examples below.
Indeed, consider the $2$-dimensional model $\mathbb{M}^{2}_{\sigma}$, with
$$
\sigma(t)=(r + \displaystyle\frac{r^{7}}{2})\displaystyle
\exp{\displaystyle\frac{r^{6}}{6}}.
$$
It is easy to show that
$$
\displaystyle\frac{1}{\int_{0}^{\infty}I_{2}(s)^{-1}ds}
=\frac{3\cdot 2^{2/3} \sqrt{3}}{\pi}\approx 2.62>
\displaystyle\inf_{[0, \infty)}\frac{ I_{2}(r)^2}{4}=\frac{36}{25}\cdot
\left(\frac{2}{3}\right)^{-1/3}\approx 1.64.
$$

On the other hand, if $\mathbb{M}^{m}_{\sigma}=\mathbb{H}^{m}(-1)$
is a totally geodesic hyperbolic space in $N=\mathbb{H}^{n}(-1)$,
then
$$
\displaystyle\frac{1}{\int_{0}^{\infty}I_{m}(s)^{-1}ds}=0
$$
by stochastic completeness, while, as observed above,
$$
\displaystyle\inf_{[0, \infty)} \frac{I_{m}(r)^2}{4}
=\frac{(m-1)}{4}.
$$

We next describe mean curvature estimates which extend previous results valid for
bounded immersions obtained, in increasing generality, in \cite{Aminov-MatSb},
\cite{HasanisKoutroufiotis-ArchMath},  \cite{JorgeXavier-MathZ}, \cite{Karp-MathAnn},
and \cite{PRS-PAMS}.
\begin{theorem}[Mean curvature estimates]
\label{thmMCE}
Let $\varphi\colon \!M\rightarrow N$ be an  isometric
immersion of a stochastically complete, $m$-dimensional Riemannian manifold
$M  $ into a $n$-dimensional Riemannian manifold
$N$ with a pole $o\in N$. Assume that the radial sectional curvature of $N$ satisfies%
\[
\mathrm{Sec}_{\mathrm{rad}}^{N}\left(  x\right)  \leq-G\left(  \rho_{N}\left(
x\right)  \right)
\]
for some smooth, even function $G\left(  t\right) $ satisfying \eqref{eqBMR-memoirs}. Assume also that
\begin{enumerate}
\item[(i)] $\mathcal{I}_{m}\left(  r\right)  $ is non-decreasing for $r\in (0, r_{\varphi}]$

\item[(ii)]
\[
{I_m\left(  r\right)^{-1}  }\in L^{1}\left(0, r_{\varphi}\right)  .
\]
\end{enumerate}
Then
\[
\sup_M \frac{\vert\mathbf{H}(x)\vert}{I_{m}(\rho_x)}\geq 1
\]
In particular, if  $r_{\varphi}=+\infty$, ($\varphi$ is unbounded in $N$) then
\[
\limsup_{x\to \infty} \frac{\vert\mathbf{H}(x)\vert}{I_{m}(\rho_x)}\geq 1.
\]
If $r_{\varphi}=+\infty$ and $I_m(r)^{-1}\to 0$ as $r\to +\infty$  then
\[
\sup_M |H| = +\infty.
\]\end{theorem}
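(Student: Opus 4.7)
The plan is to test the weak-maximum-principle characterization of stochastic completeness (Pigola-Rigoli-Setti) against a carefully chosen radial function on $M$. Set $u = \rho_N\circ\varphi$ and define
\[
v = \Lambda(u), \qquad \Lambda(r)=\int_0^r I_m(t)^{-1}\,dt.
\]
By hypothesis (ii), $\Lambda$ is bounded on $[0,r_\varphi)$, so $v$ is bounded on $M$ with $\sup_M v \leq \Lambda(r_\varphi)$, regardless of whether $\varphi$ is bounded or not; moreover, since $I_m(r)\sim m/r$ as $r\to 0^+$ gives $\Lambda(r)\sim r^2/(2m)$, the composition $v$ is $C^2$ globally on $M$ (even where $\varphi$ meets the pole). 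The goal will be to derive the pointwise inequality $\Delta_M v \geq 1-|\mathbf{H}|/I_m(u)$ and then feed it into the weak maximum principle.

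For the Laplacian bound, begin with the standard submanifold identity
\[
\Delta_M u = \sum_{i=1}^m \mathrm{Hess}_N\rho_N(\varphi_*e_i,\varphi_*e_i)+\langle \nabla^N\rho_N, \mathbf{H}\rangle.
\]
Condition \eqref{eqBMR-memoirs} ensures $\sigma'\geq 0$ on $[0,r_\varphi)$, so Hessian comparison on the pole manifold $N$ gives $\mathrm{Hess}_N\rho_N(X,X)\geq (\sigma'/\sigma)(|X|^2-\langle X,\nabla\rho_N\rangle^2)$. Combining this with Cauchy-Schwarz, $|\nabla^M u|\leq 1$, and summing over an orthonormal frame of $T_xM$,
\[
\Delta_M u \geq \frac{\sigma'(u)}{\sigma(u)}\bigl(m-|\nabla^M u|^2\bigr) - |\mathbf{H}|.
\]
Applying the chain rule with $\Lambda'=I_m^{-1}$, and using the direct identity $(I_m^{-1})'=1-(m-1)(\sigma'/\sigma)I_m^{-1}$ obtained from $I_m=\sigma^{m-1}/\int_0^r\sigma^{m-1}$, the coefficient of $|\nabla^M u|^2$ in $\Delta_M v$ collapses to $1-m(\sigma'/\sigma)I_m^{-1}$. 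Hypothesis (i) is exactly equivalent, via the identity $\mathcal{I}_m'/\mathcal{I}_m=m\sigma'/\sigma-I_m$ computed in the excerpt, to this coefficient being nonpositive; so one may lower-bound $[1-m(\sigma'/\sigma)I_m^{-1}]|\nabla^M u|^2$ by $1-m(\sigma'/\sigma)I_m^{-1}$. All $\sigma'/\sigma$ terms cancel, leaving
\[
\Delta_M v \geq 1-\frac{|\mathbf{H}|}{I_m(u)}.
\]

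With this inequality in hand, stochastic completeness of $M$ supplies a sequence $\{x_k\}\subset M$ with $v(x_k)\to\sup_M v$ and $\Delta_M v(x_k)<1/k$, whence $\limsup_k |\mathbf{H}(x_k)|/I_m(u(x_k))\geq 1$, which is the first assertion $\sup_M |\mathbf{H}|/I_m(\rho_x)\geq 1$. In the unbounded case $r_\varphi=+\infty$, strict monotonicity of $\Lambda$ combined with $v(x_k)\to\Lambda(+\infty)$ forces $u(x_k)\to+\infty$, yielding the $\limsup$ statement; and if additionally $I_m^{-1}\to 0$ at infinity, the asymptotic inequality $|\mathbf{H}(x_k)|\geq (1-o(1))\,I_m(u(x_k))\to+\infty$ gives $\sup_M|\mathbf{H}|=+\infty$. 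The main technical point---and the only place where all the hypotheses are simultaneously deployed---is the algebraic collapse of the chain-rule expression to the clean form $1-|\mathbf{H}|/I_m(u)$; the integrability condition (ii) then enters solely to make $v$ bounded, which is precisely what is needed to apply the weak maximum principle in the unbounded regime.
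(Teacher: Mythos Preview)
Your proof is correct and follows essentially the same route as the paper: both build the bounded test function $v=\int_0^{\rho_N\circ\varphi} I_m^{-1}$, use Hessian comparison together with the hypothesis $\mathcal{I}_m'\geq 0$ to obtain the pointwise inequality $\Delta_M v\geq 1-|\mathbf{H}|/I_m(\rho_x)$, and then invoke the weak maximum principle at infinity from stochastic completeness. The only cosmetic difference is that the paper organizes the Laplacian computation via a completion of $\{d\varphi X_i\}$ to a frame of $T_{\varphi(x)}N$ (reusing \eqref{eq-H} from the proof of Theorem~\ref{spec-est}), whereas you collapse the coefficient of $|\nabla^M u|^2$ directly and bound it using $|\nabla^M u|^2\leq 1$; the resulting inequality is identical.
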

\begin{remark}
If the mean curvature $H$ of  $M$ is bounded, then either $r_\varphi <+\infty$, and $\varphi$  has bounded image in $N$, or   $I_m(r)^{-1}\not\to 0$ as $r\to +\infty$. An immediate consequence  is that, under the above assumptions,
an immersed submanifold with bounded mean curvature in $N$ is
stochastically incomplete. This completes the picture initiated in
 \cite[Thm.10]{BPS-POTA}, where we proved that, regardless the condition (ii) on the isoperimetric ratio $I_{m}\left(  r\right)  $, a properly immersed minimal
submanifold of $N$ is not $L^{1}$-Liouville (hence stochastically incomplete).

We also note that, according to Theorem \ref{thmMCE}, Theorem \ref{spec-est} does not apply if $I_{m}^{-1}$ is integrable and $M$ is stochastically complete.
\end{remark}
\begin{remark} An application of de L'Hospital rule show that condition $I_m(r)^{-1}\to 0$ as $r\to \infty$ holds provided $\displaystyle\frac{\sigma'}{\sigma}\to +\infty$, which in turn is typical of a super-exponential behavior of $\sigma(r)$. We have already remarked that  condition $\mathcal{I}_{m}'\geq 0$ is satisfied provided $\sigma''(r)\geq 0$.
Finally, as already recalled, the integrability of $I_m(r)^{-1}$ is equivalent to the stochastic incompleteness of the model $\mathbb{M}^m_\sigma$ and is implied by a sufficiently fast growth of $\sigma$.
\end{remark}

\section{Preliminaries}Let $M$ be a smooth  Riemannian manifold and  $\Omega \subset M$ an
arbitrary open subset. The fundamental tone
$\lambda^{\ast}(\Omega )$ of $\Omega$, is defined by
$$
\lambda^{\ast}(\Omega )=
\inf\left\{\frac{\int_{\Omega}\vert \nabla f\vert^{2}}{\int_{\Omega} f^{2}},\,f\in
 H^1_0(\Omega )\setminus\{0\}\right\},
$$
where $H_{0}^{1}(\Omega )$ is the completion  of $C^{\infty}_{0}(\Omega )$ with respect to
the norm
$$
\Vert\varphi \Vert_{\Omega}^2=\int_{\Omega}\varphi^{2}+\int_{\Omega} \vert\nabla \varphi\vert^2.
$$
When $\Omega=M$ is a complete non-compact   Riemannian manifold,
the fundamental tone $\lambda^{\ast}(M)$ coincides with the bottom   $\inf \Sigma(-\Delta)$
of the $L^2$-spectrum $\Sigma(-\Delta) \subset [0,\infty)$ of the unique self-adjoint extension of the Laplacian
$\Delta$ acting on $C_{0}^{\infty}(M)$ also denoted by
$\Delta$.
When $\Omega$ is compact with  boundary $\partial \Omega$, then the fundamental tone
is the bottom of the $L^2$-spectrum of the Friedrichs
extension of  $-\Delta$ initially defined on $C_c^\infty (\Omega)$.
Moreover,  there exists $u\in C^{\infty}(\Omega)\cap H_{0}^{1}(\Omega)$, positive in $\Omega$
satisfying $\triangle u+\lambda^{\ast}(\Omega)u=0$, ($u\vert_{\partial \Omega} =0$
if $\partial \Omega\neq \emptyset$ and piecewise smooth).
The spectrum decomposes as $\Sigma(-\Delta)=\Sigma_{p}(-\Delta)\cup\Sigma_{ess}(-\Delta)$
where $\Sigma_{p}(-\Delta)$ is  formed by   eigenvalues  with finite multiplicity
and $\Sigma_{ess}(-\Delta)$ is  formed by accumulation points of the spectrum and
by the eigenvalues  with infinite multiplicity.
It is said that  $M$ has discrete spectrum if $\Sigma_{ess}(-\Delta)=\emptyset$
and that  $M$ has purely continuous spectrum if  $\Sigma_{p}(-\Delta)=\emptyset$.
It is well known that for every exhaustion  of $M$ by relatively compact open sets
$\{K_j\}$ with  boundary, one has,
$
\inf \Sigma_{ess}(-\Delta) = \lim_{j \to +\infty} \lambda^*(M \backslash K_j).
$
It follows that $-\Delta$ has pure discrete spectrum if and only if
$$
\lim_{j \to +\infty} \lambda^*(M \backslash K_j)=\infty.
$$
The following two lemmas are useful to  obtain lower bounds for the fundamental tones
of open sets of Riemannian manifolds.
\begin{lemma}[\cite{BeMo-angloban}]
Let $\Omega \subset M$ be an open subset of  a Riemannian manifold $M$.
Then the fundamental tone of $\Omega$ is bounded below by
\begin{equation}\lambda^{\ast}(\Omega)\geq \frac{c(\Omega)^{2}}{4},\label{eqML1}
\end{equation}
\label{mainlemma}
where
$c(\Omega)=\sup\left\{\displaystyle\frac{\inf_{\Omega}  \diver X}
{\sup_{\Omega}\vert X\vert}:\, X\in{\mathcal X}^{\infty}(\Omega ),\, \diver X \geq 0\right\}
$
and
$ {\mathcal X}^{\infty}(\Omega )$ is the set of all smooth vector fields in $\Omega$.
\end{lemma}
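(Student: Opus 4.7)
The plan is to prove the bound by a standard integration-by-parts argument coupled with the Cauchy--Schwarz inequality, producing what is essentially a divergence-form Hardy-type estimate. By definition of $\lambda^*(\Omega)$ it suffices to show that for every $f\in C_0^\infty(\Omega)\setminus\{0\}$ and every admissible vector field $X\in \mathcal{X}^\infty(\Omega)$ with $\diver X\geq 0$ one has
\[
\frac{\int_\Omega |\nabla f|^2}{\int_\Omega f^2}\geq \frac{1}{4}\left(\frac{\inf_\Omega \diver X}{\sup_\Omega |X|}\right)^{\!2}.
\]
Taking first the infimum over $f$ and then the supremum over $X$ yields \eqref{eqML1}, with the usual density argument to pass from $C_0^\infty(\Omega)$ to $H_0^1(\Omega)$.

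The first step is to integrate by parts. Since $f$ has compact support in $\Omega$, no boundary term appears, and
\[
\int_\Omega f^2\,\diver X = -2\int_\Omega f\,\langle \nabla f, X\rangle.
\]
Using $\diver X \geq \inf_\Omega \diver X \geq 0$ on the left and the pointwise bound $|\langle \nabla f, X\rangle|\leq |\nabla f|\,|X|\leq (\sup_\Omega |X|)|\nabla f|$ on the right, we obtain
\[
(\inf_\Omega \diver X)\int_\Omega f^2 \;\leq\; 2\,(\sup_\Omega |X|)\int_\Omega |f|\,|\nabla f|.
\]

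The second step is a single application of the Cauchy--Schwarz inequality to $\int_\Omega |f|\,|\nabla f|$, which gives
\[
\int_\Omega |f|\,|\nabla f| \;\leq\; \left(\int_\Omega f^2\right)^{\!1/2}\!\left(\int_\Omega |\nabla f|^2\right)^{\!1/2}.
\]
Dividing by $(\sup_\Omega |X|)\bigl(\int_\Omega f^2\bigr)^{1/2}$, squaring, and dividing by $4$ produces the desired inequality for the Rayleigh quotient of $f$. Taking the infimum over $f\in C_0^\infty(\Omega)\setminus\{0\}$ and then the supremum over admissible $X$ finishes the argument.

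There is no real obstacle here; the only point requiring a word of care is the legitimacy of the integration by parts, which is immediate because $f$ is compactly supported in $\Omega$ and $X$ is smooth there, so no regularity or boundary issues arise. One should also note that the statement is vacuous unless there exists at least one admissible $X$ with $\sup_\Omega |X|<\infty$ and $\inf_\Omega \diver X>0$; otherwise $c(\Omega)=0$ and the bound $\lambda^*(\Omega)\geq 0$ is trivial.
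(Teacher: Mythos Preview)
Your proof is correct and is precisely the standard argument for this estimate. Note that the paper itself does not supply a proof of this lemma; it is quoted from \cite{BeMo-angloban}, where the original proof is exactly the integration-by-parts and Cauchy--Schwarz computation you wrote down.
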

\begin{lemma}[Barta \cite{barta}, \cite{BeMo-angloban2}]
Let $\Omega \subset M$ be an open subset of  a Riemannian manifold $M$ and
$u\colon \Omega \to \mathbb{R}$ be a smooth positive function.
Then
$$\lambda^{\ast}(\Omega) \geq \inf_{\Omega}\left[-\frac{\Delta u}{u}\right]\cdot
$$
\end{lemma}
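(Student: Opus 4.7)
The plan is to establish the Rayleigh-quotient inequality at the level of test functions: by density of $C_0^\infty(\Omega)$ in $H_0^1(\Omega)$, it suffices to show that for every $f \in C_0^\infty(\Omega)\setminus\{0\}$ one has $\int_\Omega |\nabla f|^2 \geq \lambda \int_\Omega f^2$, where $\lambda := \inf_\Omega\bigl(-\Delta u/u\bigr)$. The key point is that $u$ is smooth and strictly positive on $\Omega$, so the weight $f^2/u$ lies in $C_0^\infty(\Omega)$ whenever $f$ does, and the pointwise identity $-\Delta u = (-\Delta u/u)\,u$ can be tested against it without any boundary contributions.

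First I would multiply the identity $-\Delta u = hu$ (with $h := -\Delta u/u$) by $f^2/u$, integrate, and apply Green's formula to the right-hand side, which produces $\int_\Omega \langle \nabla(f^2/u),\nabla u\rangle$. Expanding $\nabla(f^2/u) = (2f/u)\nabla f - (f^2/u^2)\nabla u$ yields a cross term linear in $\nabla f$ together with the ``bad'' term $-\int_\Omega (f^2/u^2)|\nabla u|^2$. The second step is to dominate the cross term by the pointwise AM-GM inequality
$$2(f/u)\langle \nabla f,\nabla u\rangle \;\leq\; |\nabla f|^2 \;+\; (f^2/u^2)|\nabla u|^2,$$
after which the two $|\nabla u|^2/u^2$ contributions cancel exactly and one is left with $\int_\Omega f^2 h \leq \int_\Omega |\nabla f|^2$. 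Replacing $h$ by its infimum $\lambda$ gives the required lower bound on the Rayleigh quotient.

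There is essentially no obstacle: the whole argument reduces to a clever choice of test weight ($f^2/u$) combined with a single application of Cauchy--Schwarz. The only point deserving a line of justification is the integration by parts, which is immediate because $f \in C_0^\infty(\Omega)$ and $u>0$ is smooth on a neighborhood of $\mathrm{supp}\,f$, so every integrand appearing is smooth and compactly supported inside $\Omega$; no decay or boundary hypotheses are needed.
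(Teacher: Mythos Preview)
Your argument is correct and is the standard proof of Barta's inequality via the test weight $f^2/u$ and the pointwise Young/Cauchy--Schwarz inequality. The paper does not supply its own proof of this lemma; it merely cites \cite{barta} and \cite{BeMo-angloban2}, so there is nothing to compare your approach against. One trivial slip: you say ``apply Green's formula to the right-hand side,'' but you mean the side containing $-\Delta u$; the computation you then carry out is the correct one.
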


\section{Proof of the results}
\begin{proof}[Proof of Theorem \ref{SpectralEstimates}]
Recall that if $\varphi \colon M\to N$ is an isometric immersion  and $g:N\to
\mathbb{R}$ and $F:\mathbb{R}\to \mathbb{R}$ are smooth functions, then for
every $X\in T_xM$ we have
\[
\begin{split}
\mathrm{Hess}(F\circ g\circ \varphi) (X,X) &=
F''(g(\varphi(x))) \langle \nabla^N g, d\varphi X\rangle^2\\
&+F'(g(\varphi(x))) \bigl[
\mathrm{Hess}^N g (d\varphi X,d\varphi X) +\langle \nabla^N g , II
(X,X)\rangle
\bigr].
\end{split}
\]
If  $g = \rho_{N}$ is the distance function, then the assumption
on the sectional curvature of $N$ implies
\[
\mathrm{Hess}_{N} \rho_{N} (Y,Y) \geq \frac{\sigma'}\sigma \bigl[\langle Y, Y\rangle -
\langle \nabla^N \rho_{N}, Y\rangle^2\bigr]
\]
on $B_N(r_\varphi)$.

Assuming that $F'\geq 0$, letting $\{X_i\}_{i=1}^m$ be an orthonormal basis of $T_x M$ and setting $\rho_{x}=\rho_{N}(\varphi(x)) $, we obtain
\begin{eqnarray}
\Delta (F\circ \rho\circ \varphi)(x) &\geq &  m \bigl (F'\frac {\sigma'}\sigma\bigr)(\rho_{x})+
\bigl(F''-F'\frac{\sigma'}\sigma \bigr)(\rho_{x}) \sum_{i=1}^m \langle
\nabla^N \rho_{N}, d\varphi X_i\rangle^2,\nonumber \\
&& +\,F'(\rho_{x})\langle\nabla^N \rho_{N}, \mathbf{H}\rangle \nonumber.
\end{eqnarray}
Let
\begin{equation}
\label{F}
F\left( r\right)  =\int_{0}^{r}\frac{\int_{0}^{t}\sigma^{m-1}\left(
s\right)  ds}{\sigma^{m-1}\left(  t\right)  }dt,
\end{equation}
so that
$$
F' (r) =
\frac{\int_{0}^{r}\sigma^{m-1}\left(  s\right)
ds}{\sigma^{m-1}\left(  r\right)  }= I_m^{-1}(r)>0\,\,\,
{\rm and}\,\,\,
\
F''(r) = 1 - (m-1) \frac {\sigma'}\sigma F'(r),
$$
which, inserted into the last inequality, yield
\begin{eqnarray}
\Delta (F\circ \rho\circ \varphi)(x)& \geq & m \bigl (F'\frac
{\sigma'}\sigma\bigr)(\rho_x)+ \bigl[(1- m\frac{\sigma'}\sigma
F_R'(\rho_x)\bigr] \sum_{i=1}^m \langle
\nabla^N \rho, d\varphi X_i\rangle^2\nonumber \\
& & -F'(\rho_x)\vert H(\varphi (x))\vert\, \nonumber
\end{eqnarray}
We complete $\{d\varphi X_i\}_{i=1}^m$ to an orthonormal basis
$\{d\varphi X_i\}_{i=1}^m\cup \{Y_j\}_{j=m+1}^n$ on $T_{\varphi(x)}N$,  and
note that
\[
\sum_{i} \langle \nabla^N\rho, d\varphi X_i\rangle^2 + \sum_j \langle \nabla^N\rho,
Y_j\rangle^2 =1.
\]
Inserting  this into the above inequality and using the assumption $ \mathcal{I}_{m}'(t)\geq 0$ in the form
\[
m\frac{\sigma'}\sigma F'=m\frac{\sigma'}\sigma\frac{\int_{0}^{t}\sigma^{m-1}\left(  s\right)
ds}{\sigma^{m-1}\left(  t\right)  } \geq 1
\]
we finally obtain
\begin{eqnarray}\label{eq-H}
\Delta (F\circ \rho\circ \varphi)(x)& \geq & 1 + \left[ m \bigl (F'\frac
{\sigma'}\sigma\bigr)(\rho_x)-1 \right]\sum_j \langle \nabla^N\rho,
Y_j\rangle^2 - F'(\rho_x)\vert {\mathbf H}(\varphi(x))\vert\nonumber\\
&\geq & 1 - F'(\rho_x)\vert {\mathbf H}(\varphi(x))\vert.
\end{eqnarray}
Thus, if $X=\nabla(F\circ \rho\circ \varphi)$, we have
\begin{equation*}
\mathrm{div}_M X \geq  1- \sup_M \left(F'(\rho_x)  |\mathbf{H}(x)|\right) = 1-A
\end{equation*}
and
\begin{equation*}
|X|\leq F'(\rho_x) = I_m^{-1}(\rho_x) \leq \frac 1{\inf_{[0,r_{\varphi}]} I_m(r)},
\end{equation*}
and we conclude that
\[
\lambda^*(M) \geq (1-A)^2 \inf_{[0,r_{\varphi}]}\frac{\left( I_m(r)\right)^2} 4.
\]
The estimate
\[
\lambda^*(M) \geq \frac{1-A}{ \int_0^{r_{\varphi}} I_m^{-1}(t) dt}
\]
in (a) is  an application of Barta's Theorem.
We consider first the case where $r_{\varphi}= +\infty$, and assume that
$I_m^{-1}\in L^1([0,+\infty))$ for otherwise the estimate is trivial.
Define
\[
\tilde{F}(r) =  \int_r^{+\infty} I_m^{-1}(t)dt,
\]
and let $u= \tilde{F}\circ \rho_N\circ \varphi$. Then $u$ is positive on $M$ and, since
\[
\tilde{F}'(r) = - I_m^{-1} (r) = -\frac{\int_0^r \sigma^{m-1} dt}{\sigma^{m-1}(r)} <0
\text{ and } \tilde{F}''(r) = -1 - (m-1) \frac{\sigma'}{\sigma} \tilde{F}'(r),
\]
a computation similar to that performed in the first part of the proof shows that \[
- \Delta_M u \geq 1 + |\mathbf{H}| \tilde{F}'(\rho_x)
= 1-\frac{|\mathbf{H}|}{I_m(r)}\geq 1-A
\]
and it follows from Barta's Theorem that
\[
\lambda^*(M)\geq \inf_M \left(\frac{-\Delta u}{u}\right) \geq \frac{1-A} {\int_0^{r_\varphi} I_m^{-1} dt},
\]
as required.

The case where $r_{\varphi}<+\infty$ is  similar. Since to apply Barta's theorem we need $u$ to be positive, we note that our assumptions imply that $I_m$ is well defined and positive in $[0,r_{\varphi}+\epsilon]$ for every $\epsilon>0$ sufficiently small. Next  we let
\[
\tilde{F}_{\epsilon}(r) = \int_r^{r_{\varphi}+ \epsilon} I_m^{-1} dt,
\]
and define $u_\epsilon$ accordingly. Arguing as above shows that
\[
\lambda^*(M) \geq \inf_M \left(\frac{-\Delta u_{\epsilon}}{u_{\epsilon}}\right)
\geq
\frac{1-A_{\epsilon}}{\int_0^{r_{\varphi}+ \epsilon} I_m^{-1} dt},
\]
 where
 \[
 A_{\epsilon} = \sup_M |\tilde{F}'_\epsilon(\rho_x) \mathbf{H}|.
 \]
 The conclusion now follows letting $\epsilon\to 0.$

Finally, if $\varphi$ is proper, $I_m^{-1}$ is integrable on $[0,\infty)$ and $A<1$, then the function $-u$ is bounded, proper, and satisfies
\[
\Delta (-u) \geq 1 - A >0
\]
on $M$.
Therefore, in the terminology of \cite{BPS-Revista}, it is a weak maximum principle violating exhaustion function, and the discreteness of the spectrum of $M$ follows from \cite[Theorem 32]{BPS-Revista}.
 \end{proof}
 \begin{proof}[Proof of Theorem \ref{thmMCE}] We maintain the notation of the first part of the proof of Theorem~\ref{spec-est}, and let $v=F\circ\rho\circ\varphi$. Then $v$ is bounded above by the assumption that $I_m^{-1}\in L^1([0,r_{\varphi}])$ and, by \eqref{eq-H}, it satisfies
 \[
 \Delta_M v \geq 1-\left(F'(\rho_x)  |\mathbf{H}(x)|\right) = 1- \frac{  |\mathbf{H}(x)|}{I_m(\rho_x)}.
 \]
Since $M$ is assumed to be stochastically complete, by the weak maximum principle at infinity there exists a  sequence $\{x_n\}$ in $M$ such that
\[
\lim_n v(x_n)= \sup_M v \quad \text{and}\quad \liminf_n \Delta_M v(x_n)\leq 0.
\]
Since $F$ is increasing, this implies that $\rho_{x_n}\to r_\varphi$. In particular, if $\varphi$ is unbounded, we conclude that
\[
\liminf_{x\to +\infty}\left(1- \frac{  |\mathbf{H}(x)|}{I_m(\rho_x)}\right) \leq 0,\,\,\text{ that is, } \,\, \limsup_{x\to \infty} \frac{  |\mathbf{H}(x)|}{I_m(\rho_x)}\geq 1.
\]
In the case where $\varphi$ is bounded, we can still conclude that
\[
\inf_M\left(1- \frac{  |\mathbf{H}(x)|}{I_m(\rho_x)}\right) \leq 0,
\,\,\text{
that is, }\,\,
\sup_M\frac{ |\mathbf{H}(x)|}{I_m(\rho_x)} \geq 1.
\]
\end{proof}

\section{Immersions into products}
The aim of this section is to prove versions of Theorems \ref{spec-est} and \ref{thmMCE} for immersions into a product manifold $N\times L$, where the factor $N$ satisfies condition \eqref{eqCurv} in the Introduction. This clearly relaxes the curvature conditions imposed on the target manifold.  As a counterpart, we need to strengthen the assumpions replacing conditions on  $\mathcal{I}_m$ and $I_M$ with  analogous conditions on $\mathcal{I}_{m-l}$ and $I_{m-l}$ respectively.

In some sense, we make up for the presence of the factor $L$ by imposing more negative curvature conditions on the factor $N$.

\begin{theorem}
\label{sp-est-products}
Let $\varphi$ be an isometric immersion of a complete Riemannian manifold $M  $ of dimension $m$ into the product $N\times \mathbb{R}^l$, where $L$ and $N$ are  complete Riemannian manifolds of dimension $n$  and $l$, respectively. Assume that $N$ has a pole  $p\in N$ and that its radial sectional curvature satisfies
\begin{equation}
\tag{\ref{eqCurv}}
K_N(x)\leq - G(\rho_{N}(x))
\end{equation}
where   $\rho_{N}\left(  y\right)  ={\rm dist}_{N}(p,y)  $
and  $G$ is a smooth, even function on $\mathbb{R}$. Assume that $m\geq l+1$ and that the solution $\sigma$ of  \eqref{sigma ivp} satisfies
$\sigma'\geq 0$ on $[0,r_{\pi_N\varphi}]$, where $\pi_N$ is the projection onto $N$.
Suppose further that $\mathcal{I}_{m-l}$ is nondecreasing in that interval,
and that
\[
A=\sup_{M}\displaystyle\frac{\left\vert \mathbf{H}\right\vert \left(  x\right)  }{I_{m-l}\left(
\rho_{N}\left( \pi_N \varphi\left(  x\right)  \right)  \right)  }\leq 1,
\]
where $\mathbf{H}$ is the mean curvature vector field of $\varphi$. Then
\begin{enumerate}
\item[(a)] The bottom of the spectrum of the Laplace-Beltrami operator of $M$
satisfies the estimate%
\[
\lambda^{\ast}\left(M\right)  \geq
\max\left\{
(1-A)^2 \inf_{[0,r_{\pi_N\varphi}]}
\frac{I_{m-l}\left(  r\right)
^{2}}{4},\,\,
\frac{\left(  1-A\right)  }
{\int_{0}^{r_{\pi_N\varphi} }I_{m-l}\left(  t\right)  ^{-1}dt}
\right\}
\]
\item[(b)]
If $\pi_N\circ \varphi$ is proper, $\int_0^{+\infty} I_{m-l}^{-1} dt< +\infty$ and $A<1$ then the spectrum of $-\Delta_{M}$ is discrete.
\end{enumerate}
\end{theorem}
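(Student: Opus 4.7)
The plan is to follow the proof of Theorem \ref{spec-est} almost line-by-line, with the test function $v = F \circ \rho_N \circ \pi_N \circ \varphi$ where $F$ is now defined by $F(r)=\int_0^r I_{m-l}^{-1}(t)\,dt$, so that $F'(r) = I_{m-l}^{-1}(r)$ and $F''(r) = 1 - (m-l-1)(\sigma'/\sigma)F'(r)$. The key target is the Laplacian estimate
\[
\Delta_M v(x) \;\geq\; 1 - \frac{|\mathbf{H}(x)|}{I_{m-l}\bigl(\rho_N(\pi_N\varphi(x))\bigr)} \;\geq\; 1 - A.
\]
Once it is in hand, the two estimates in (a) follow respectively from Lemma \ref{mainlemma} applied to $X=\nabla v$ (using $|X|=F'(\rho)\leq 1/\inf I_{m-l}$) and from Barta's theorem applied to $u_\epsilon = \tilde F_\epsilon \circ \rho_N \circ \pi_N \circ \varphi$ with $\tilde F_\epsilon(r) = \int_r^{r_{\pi_N\varphi}+\epsilon}I_{m-l}^{-1}\,dt$ as $\epsilon\to 0^+$; the discreteness in (b) follows from the weak-maximum-principle violation argument of \cite[Thm.~32]{BPS-Revista} applied to $-u$, which under the hypotheses of (b) is bounded, proper, and satisfies $\Delta(-u)\geq 1-A>0$. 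Each of these steps reproduces a corresponding step in the proof of Theorem \ref{spec-est} with the symbols $\rho$, $r_\varphi$ and $I_m$ replaced by $\rho_N\circ\pi_N\circ\varphi$, $r_{\pi_N\varphi}$ and $I_{m-l}$.

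The only genuinely new computation is the Laplacian bound itself. The function $\tilde\rho(y,z)=\rho_N(y)$ on the product $N\times L$ has gradient $(\nabla^N\rho_N,0)$ and, since the Levi-Civita connection of a product introduces no mixed terms,
\[
\mathrm{Hess}^{N\times L}\tilde\rho(V,V) = \mathrm{Hess}^N\rho_N(V^N,V^N)
\]
for every tangent vector $V$. Writing $d\varphi X_i = (d\varphi X_i)^N + (d\varphi X_i)^L$ along an orthonormal frame $\{X_i\}_{i=1}^m$ of $T_xM$ and applying the radial Hessian comparison exactly as in the proof of Theorem \ref{spec-est}, the chain rule produces
\[
\Delta_M v(x) \;\geq\; F''(\rho)\,S + F'(\rho)\frac{\sigma'}{\sigma}\bigl[\Sigma-S\bigr] - F'(\rho)\,|\mathbf{H}(x)|,
\]
where $\rho=\rho_N(\pi_N\varphi(x))$, $\Sigma=\sum_i|(d\varphi X_i)^N|^2$ and $S=\sum_i\langle\nabla^N\rho_N,(d\varphi X_i)^N\rangle^2$. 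Two trace estimates close the argument. The upper bound $S\leq 1$ is obtained as in Theorem \ref{spec-est}, by completing $\{d\varphi X_i\}$ to an orthonormal frame of $T_{\varphi(x)}(N\times L)$ and using $|\nabla^N\rho_N|=1$. The genuinely new, dimensional lower bound is
\[
\Sigma \;\geq\; m-l, \qquad \text{equivalently}\qquad \sum_i |(d\varphi X_i)^L|^2 \leq l,
\]
which follows from $\sum_i |(d\varphi X_i)^L|^2 = \sum_{j=1}^{l}|f_j^\top|^2 \leq l$, where $\{f_j\}_{j=1}^{l}$ is any orthonormal basis of $T_{\pi_L\varphi(x)}L$ and $f_j^\top$ denotes its orthogonal projection onto $d\varphi(T_xM)$; the hypothesis $m\geq l+1$ ensures $m-l\geq 1$ so that $I_{m-l}$ is defined. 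Substituting $F''=1-(m-l-1)(\sigma'/\sigma)F'$, the coefficient of $S$ in the displayed lower bound collapses to $1-(m-l)(\sigma'/\sigma)F'$, which is $\leq 0$ precisely because the hypothesis $\mathcal{I}_{m-l}'\geq 0$ is equivalent to $(m-l)(\sigma'/\sigma)F'\geq 1$; plugging $S\leq 1$ and $\Sigma\geq m-l$ into the two terms in the correct directions then telescopes everything to $\Delta_M v \geq 1 - F'(\rho)|\mathbf{H}|$, as desired.

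The main obstacle I anticipate is this coordinated, sign-aware use of the two trace bounds: one must apply the \emph{upper} bound $S\leq 1$ to a term whose coefficient $1-(m-l)(\sigma'/\sigma)F'$ is nonpositive, and the \emph{lower} bound $\Sigma\geq m-l$ to a term whose coefficient $F'\sigma'/\sigma$ is nonnegative. Once these directions are correctly identified, the resulting inequality is the verbatim analog of \eqref{eq-H} with $m$ replaced everywhere by $m-l$, and the remainder of the argument transplants unchanged from Theorem \ref{spec-est}.
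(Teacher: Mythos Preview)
Your proposal is correct and follows essentially the same approach as the paper: the same test function $F\circ\rho_N\circ\pi_N\circ\varphi$ with $F'=I_{m-l}^{-1}$, the same Hessian comparison on the $N$-factor, the same dimensional trace bound $\Sigma\geq m-l$ (the paper states this without your dual-basis justification), and then Lemma~\ref{mainlemma}, Barta's theorem, and the weak-maximum-principle-violating exhaustion exactly as in Theorem~\ref{spec-est}. The only slip is the equality $|X|=F'(\rho)$, which should read $|X|\leq F'(\rho)$ since $|\nabla_M(\rho_N\circ\pi_N\circ\varphi)|\leq |\nabla^{N\times L}\tilde\rho|=1$; this is harmless for the argument.
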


\begin{proof}
We continue to keep the notation of the proof of Theorem~\ref{spec-est}.
Assuming that $F$ is smooth and satisfies $F'\geq 0,$ we consider the function $F\circ \rho_N \circ \pi_N \circ \varphi$,
and argue as in Theorem~\ref{spec-est} to obtain
\begin{eqnarray}
\Delta (F\circ \rho\circ\pi \circ \varphi)(x) &\geq &
\bigl(F''-F'\frac{\sigma'}\sigma \bigr)(\rho_{x}) \sum_{i=1}^m \langle
\nabla^N \rho_{N}, d\pi_N d\varphi X_i\rangle^2\nonumber \\
&& + \bigl (F'\frac {\sigma'}\sigma\bigr)(\rho_{x})\sum_{i=1}^m
|d\pi_N d\varphi(X_i)|^2 +\,F'(\rho_{x})\langle\nabla^N \rho_{N}, d\pi_N\mathbf{H}\rangle \nonumber.
\end{eqnarray}
Choosing
\begin{equation*}
F\left( r\right)  =\int_{0}^{r}\frac{\int_{0}^{t}\sigma^{m-l-2}\left(
s\right)  ds}{\sigma^{m-l-2}\left(  t\right)  }dt,
\end{equation*}
and inserting the identities
$$
F' (r) =
\frac{\int_{0}^{r}\sigma^{m-l-1}\left(  s\right)
ds}{\sigma^{m-l-1}\left(  r\right)  }= I_{m-l-}^{-1}(r)>0\,\,\,
{\rm and}\,\,\,
\
F''(r) = 1 - (m-l-1) \frac {\sigma'}\sigma F'(r),
$$
into the last inequality yields
\begin{eqnarray}
\Delta_M (F\circ \rho\circ\pi \circ \varphi)(x) &\geq &
\left[1 - (m-l-1)F'\frac{\sigma'}\sigma \right](\rho_{x}) \sum_{i=1}^m \langle
\nabla^N \rho_{N}, d\pi_N d\varphi X_i\rangle^2\nonumber \\
&& + \bigl (F'\frac {\sigma'}\sigma\bigr)(\rho_{x})
\sum_{i=1}^m |d\pi_N d\varphi(X_i)|^2
+\,F'(\rho_{x})\langle\nabla^N \rho_{N}, d\pi_N\mathbf{H}\rangle \nonumber.
\end{eqnarray}
Since $d\pi_N$ is the orthogonal projection onto $T_zN$ which is of codimension
$l$ in $T_{(z,u)}(N\times L)$, and $d\varphi(X_i)$ are $m$-orthonormal vectors, it it easy to verify that
\[
\sum_{i=1}^m |d\pi_N d\varphi(X_i)|^2  \geq m-l= (m-l)|\nabla^N\rho_N |^2.
\]
Using this, the fact that $|\nabla^N\rho|^2 = 1\geq \sum_i \langle\nabla^N\rho_N, d\pi_N d\varphi X_i\rangle^2$, and the assumption that $\mathcal{I}_{m-l}$ is nondecreasing in the form
\[
(m-l) \frac{\sigma'}{\sigma} F'(r)\geq 1
\]
we conclude that the right hand side of the above inequality is bounded below by
\begin{eqnarray}
(m-l)F'\frac{\sigma'}\sigma (\rho_{x}) \bigl[ 1-
\sum_{i=1}^m \langle \nabla^N \rho_{N}, d\pi_N d\varphi X_i\rangle^2\bigr]
+ \sum_{i=1}^m \langle \nabla^N \rho_{N}, d\pi_N d\varphi X_i\rangle^2 - F'(\rho_{x})|\mathbf{H}|&&\nonumber\\
\geq 1 - F'(\rho_x) |\mathbf{H}|&&\nonumber
\end{eqnarray}
Thus, if $X=\nabla(F\circ\rho_N \circ\pi_N\circ\varphi)$, then
\[
\mathrm{div}_M X\geq
1 - F'(\rho_x) |\mathbf{H}|,
\]
and the estimate
\[
(1-A)^2 \inf_{[0,r_{\pi_N\varphi} ]}
\frac{I_{m-l}\left(  r\right)
^{2}}{4}
\]
follows arguing as in Theorem~\ref{spec-est}.

In a completely similar manner, the second estimate in a) follows applying Barta's theorem to the function $u=\tilde{F}\circ\rho_N\circ \pi_N\circ \varphi$ with
\[
\tilde{F}(r)= \int_r^{\mathrm{diam}(\pi_N\varphi(M))} I_{m-l}(t)^{-1} dt,
\]
and conclusion b) in the statement is obtained noticing that if $\pi_N \circ \varphi $ is proper, and $I_{m-l}^{-1}$ is integrable, then $-u$ is a a weak maximum principle violating exhaustion function.
\end{proof}

In a similar fashion we have the following analogue of Theorem~\ref{thmMCE}, which complements previous results by
L. J. Alias, G.P. Bessa and M. Dacjzer, \cite{ABD-MathAnn}.

\begin{theorem}
\label{mcurv_product}
Let $\varphi\colon \!M\rightarrow N$ be an isometric
immersion of a stochastically complete, $m$-dimensional Riemannian manifold
$M  $ into the product $N\times L$, where $N$ and $L$ are complete Riemannian manifolds of dimension $n$ and $l$ respectively, with $m\geq l+1$, and $N$ satisifes the conditions listed in the statement of Theorem~\ref{sp-est-products}.
Assume also that
\begin{enumerate}
\item[(i)] $\mathcal{I}_{m-l}\left(  r\right)  $ is non-decreasing
on $[0,\pi_N(\varphi(M))]$,
\item[(ii)]
$
{I_{m-l}\left(  r\right)^{-1}  }\in L^{1}\left(+\infty\right)
$
if $\pi_N \varphi$ is unbounded.
\end{enumerate}
Then
\[
\sup_M \frac{\vert\mathbf{H}(x)\vert}{I_{m-l}(\rho_x)}\geq 1
\]
In particular, if  $r_{\pi_N\varphi}=+\infty$, ($\pi_N \varphi$ is unbounded in $N$) then
\[
\limsup_{x\to \infty} \frac{\vert\mathbf{H}(x)\vert}{I_{m-l}(\rho_x)}\geq 1.
\]
If $r_{\pi_N \varphi}=+\infty$ and $I_{m-l}(r)^{-1}\to 0$ as $r\to +\infty$  then
\[
\sup_M |H| = +\infty.
\]

\end{theorem}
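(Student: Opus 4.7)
The plan is to adapt the proof of Theorem~\ref{thmMCE}, with the Hessian/Laplacian computation already carried out in the proof of Theorem~\ref{sp-est-products} replacing the one used for Theorem~\ref{spec-est}. First I would set $v = F\circ\rho_N\circ\pi_N\circ\varphi$, where $F$ is the auxiliary function from the proof of Theorem~\ref{sp-est-products} with $F'(r)=I_{m-l}^{-1}(r)$. That same computation, performed verbatim and without ever invoking $A\le 1$, gives the pointwise inequality
\[
\Delta_M v(x)\;\geq\;1-F'(\rho_x)\,|\mathbf{H}(x)|\;=\;1-\frac{|\mathbf{H}(x)|}{I_{m-l}(\rho_x)},
\]
where $\rho_x=\rho_N(\pi_N\varphi(x))$.

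Next I would verify that $v$ is bounded above on $M$. If $r_{\pi_N\varphi}<+\infty$, this is immediate from the continuity of $F$ on $[0,r_{\pi_N\varphi}]$. If $r_{\pi_N\varphi}=+\infty$, the integrability assumption (ii) is precisely the statement that $F(+\infty)=\int_0^{+\infty}I_{m-l}^{-1}(t)\,dt<+\infty$, so $v$ is again bounded. Since $M$ is stochastically complete, the weak maximum principle at infinity produces a sequence $\{x_n\}\subset M$ with $v(x_n)\to\sup_M v$ and $\liminf_n\Delta_M v(x_n)\leq 0$. Combining this with the pointwise inequality yields
\[
0\;\geq\;\liminf_n\!\left(1-\frac{|\mathbf{H}(x_n)|}{I_{m-l}(\rho_{x_n})}\right),
\]
which gives the first assertion $\sup_M |\mathbf{H}|/I_{m-l}(\rho_x)\geq 1$.

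For the refinements in the unbounded case, I would use that $F$ is strictly increasing: the convergence $v(x_n)\to\sup_M v=F(+\infty)$ forces $\rho_{x_n}\to+\infty$, so the maximizing sequence escapes every compact set in $N$ through $\pi_N\circ\varphi$. This yields the limsup statement. Finally, under the additional hypothesis $I_{m-l}(r)^{-1}\to 0$ as $r\to+\infty$, we have $I_{m-l}(\rho_{x_n})\to+\infty$, and the inequality $\limsup_n |\mathbf{H}(x_n)|/I_{m-l}(\rho_{x_n})\geq 1$ then forces $|\mathbf{H}(x_n)|\to+\infty$ along a subsequence, so $\sup_M|\mathbf{H}|=+\infty$.

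The only genuinely delicate point, really a matter of bookkeeping, is ensuring that $v$ is bounded from above so that the weak maximum principle can be invoked: condition~(ii) is used exactly here and nowhere else, whereas condition~(i) is absorbed into the derivation of the Laplacian inequality via $(m-l)(\sigma'/\sigma)F'\geq 1$, as in the proof of Theorem~\ref{sp-est-products}. No further analytic input is required.
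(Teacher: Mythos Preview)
Your proposal is correct and follows precisely the approach the paper intends: the paper does not spell out a separate proof for this theorem but simply indicates that it is obtained ``in a similar fashion'' to Theorem~\ref{thmMCE}, using the Laplacian estimate from the proof of Theorem~\ref{sp-est-products}. Your write-up makes that implicit argument explicit, correctly observing that the derivation of $\Delta_M v\geq 1-|\mathbf{H}|/I_{m-l}(\rho_x)$ never uses the bound $A\leq 1$, and that condition~(ii) is used solely to guarantee $v$ is bounded above so the weak maximum principle applies.
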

We conclude this section noting that the above arguments can be used to give the following
version for products of the already cited mean exit time comparison results obtained in \cite{markvorsen-JDG}.

\begin{theorem}[Stochastic estimates]
\label{L1Liouville-products}
Let $M$, $N$ and $L$ be complete Riemannian manifolds of dimensions $m,$ $n$ and $l$,
 as in the statement of Theorem~\ref{mcurv_product}, with $m\geq l+1$.
 Let $\varphi: M\to N\times L$ be a minimal immersion, and assume that
(i) and (ii) in the statement of Theorem~\ref{mcurv_product} hold. Then
 $M$ is not $L^{1}$-Liouville.
\end{theorem}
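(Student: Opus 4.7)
The plan is to re-run the computation used in the proof of Theorem~\ref{sp-est-products}, now specialized to $\mathbf{H}\equiv 0$, producing a bounded smooth function on $M$ whose Laplacian is uniformly bounded below by a positive constant. One then invokes the standard stochastic argument (already carried out in \cite[Thm.9]{BPS-POTA} in the single-factor case) to conclude that the global mean exit time of $M$ is finite and, as a consequence, that $M$ fails the $L^1$-Liouville property.

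Concretely, let
\[
F(r)=\int_{0}^{r}\frac{\int_{0}^{t}\sigma^{m-l-1}(s)\,ds}{\sigma^{m-l-1}(t)}\,dt
=\int_{0}^{r}I_{m-l}^{-1}(t)\,dt,
\]
and set $v=F\circ\rho_N\circ\pi_N\circ\varphi$. The computation carried out in the proof of Theorem~\ref{sp-est-products}, combined with the monotonicity of $\mathcal{I}_{m-l}$ on $[0,r_{\pi_N\varphi}]$, gives pointwise
\[
\Delta_M v \geq 1 - F'(\rho_x)\,|\mathbf{H}(x)|,
\]
which by minimality reduces to $\Delta_M v\geq 1$ on $M$. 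Moreover, $v$ is bounded above: if $r_{\pi_N\varphi}<\infty$ this is automatic, while if $r_{\pi_N\varphi}=+\infty$ the hypothesis $I_{m-l}^{-1}\in L^{1}(+\infty)$ guarantees $\sup_{[0,+\infty)}F<\infty$.

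Setting $w=\sup_M v-v$, we obtain a non-negative bounded function satisfying $-\Delta_M w\geq 1$. A routine maximum principle comparison with the mean exit time on a relatively compact exhaustion $\{\Omega_k\}$ of $M$ — applied to $e_k-w$, where $e_k$ is determined by $\Delta e_k=-1$ in $\Omega_k$ and $e_k=0$ on $\partial\Omega_k$ — yields $e_k\leq w$ on each $\Omega_k$, and hence $E_M=\lim_k e_k\leq w$ is bounded; in particular, $M$ has finite mean exit time. The only genuinely nontrivial step is the passage from finite mean exit time to the failure of the $L^{1}$-Liouville property, but this is precisely what was established in \cite[Thm.9]{BPS-POTA} and can be imported verbatim, completing the proof.
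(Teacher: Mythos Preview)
Your proposal is correct and is precisely the argument the paper has in mind. The paper does not give an explicit proof of Theorem~\ref{L1Liouville-products}; it only remarks that ``the above arguments can be used to give the following version for products of the already cited mean exit time comparison results,'' referring to the Hessian/Laplacian computation carried out in the proof of Theorem~\ref{sp-est-products} together with the mean exit time/$L^1$-Liouville machinery of \cite[Thm.~9]{BPS-POTA}. You have filled in exactly these details: the inequality $\Delta_M v\geq 1$ for $v=F\circ\rho_N\circ\pi_N\circ\varphi$ (the minimal case of the displayed estimate in the proof of Theorem~\ref{sp-est-products}), the boundedness of $v$ from hypothesis (ii), the standard comparison giving $E_M\leq \sup_M v-v$, and the appeal to \cite{BPS-POTA} for the passage from bounded global mean exit time to the failure of the $L^1$-Liouville property.
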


\end{document}